\documentclass[12pt]{article}

\usepackage[T1]{fontenc}
\usepackage{microtype}
\usepackage[numbers,sort&compress]{natbib}
\usepackage{amsmath,amssymb,amsfonts,amsthm,mathtools}
\usepackage{geometry}
\usepackage{xcolor}
\usepackage{hyperref}

\definecolor{Emerald}{HTML}{00674F}
\hypersetup{
    colorlinks=true,
    linkcolor=blue,
    citecolor=Emerald,
    urlcolor=blue,
    pdftitle={An Exact Tail Condition for the Largest Error in Estimating a Rank-One Direction},
    pdfauthor={Guilherme Vianna}
}

\newtheorem{theorem}{Theorem}[section]
\newtheorem{lemma}[theorem]{Lemma}
\newtheorem{remark}[theorem]{Remark}
\newtheorem{corollary}[theorem]{Corollary}
\newcommand{\E}{\mathbb{E}}
\renewcommand{\Pr}{\mathbb{P}}
\renewcommand{\leq}{\leqslant}
\renewcommand{\geq}{\geqslant}

\newcommand{\R}{\mathbb{R}}
\newcommand{\1}{\mathbf{1}}
\newcommand{\op}{\mathrm{op}}

\title{An Exact Tail Condition for the Largest Error in Estimating a Rank-One Direction}
\author{
Guilherme Vianna\\[3pt]
\small University of S\~ao Paulo, S\~ao Paulo, Brazil, and Columbia University, New York, NY, USA\\
\small \texttt{guilherme.dias.vianna@usp.br}
}
\date{}

\begin{document}

\maketitle

\begin{abstract}
We consider a rectangular random matrix formed by adding a rank-one term to a matrix with independent entries. The direction on the left side of that term is estimated by the leading left singular vector, and the error is multiplied by the square of the size of the added term. For entries with mean zero and variance one, we identify the exact tail condition for the following statement to hold for every deterministic sequence of directions: the largest error over all sizes of the added term converges to the same fixed value determined by the limiting ratio of rows to columns. This condition (weaker than the existence of fourth moments) requires that the tail probability, multiplied by the fourth power of the threshold, to tend to zero. If it fails, convergence to this value already fails when both directions are coordinate vectors, even at a size fixed before the matrix is drawn. For regularly varying tails of order between two and four, the largest error tends to infinity. 
\end{abstract}

\noindent\textbf{Keywords:} random matrices; rank-one matrices; singular vectors; tail conditions; direction estimation.

\medskip
\noindent\textbf{Mathematics Subject Classification 2020:} 60B20; 15A18; 62H25.

\section{Introduction}
\label{sec:introduction}

Leading singular vectors are used to estimate the main directions in a data matrix. This is a basic step in principal component analysis, low-rank matrix denoising, large factor models, and signal recovery. In these problems, replacing an unknown direction by the leading singular vector affects both the estimated component and the reconstructed matrix; for a rank-one matrix, the error studied below is also the squared reconstruction error left after projection onto the estimated direction. See \cite{Jolliffe2002,Ding2020,DingYang2018,GavishDonoho2017,Onatski2009} for representative statistical settings.

We study the simplest rectangular model in which a rank-one matrix is added to a matrix with independent entries. Existing results describe the leading singular value and singular vectors when the size of the added term is fixed \cite[Theorems~2.2--2.3, pp.~392--393]{Ding2020}, while \cite[Theorem~2.7, p.~1687]{DingYang2018} identifies the exact tail condition governing the largest singular value of the noise matrix. These results do not by themselves control the direction error simultaneously over every possible size of the rank-one term.

First, we show that the same fourth-order tail condition is sufficient for the largest error to converge, for every deterministic sequence of directions, to a value depending only on the limiting ratio of the dimensions. Next, we show that the condition is necessary for this statement over all deterministic direction sequences: if it fails, the convergence already fails for coordinate directions at a fixed size. Under regularly varying tails of order between two and four, the largest error diverges. Finally, for Gaussian noise, we state the result in the original matrix scale and show that the leading constant cannot be reduced. The new parts of the proof are the passage from fixed sizes to all sizes, the direct treatment of large sizes, and the construction based on a single large noise entry.

\section{Preliminaries}
\label{sec:preliminaries}

\subsection{Matrix model and assumptions}

Let \(N=N_m\) and \(m\) tend to infinity in such a way that
\begin{equation}
 c_{N,m}:=\frac{N}{m}\rightarrow c\in\left(0,\infty\right).
 \label{eq:dimension-ratio}
\end{equation}
Let \(\Xi\) be a real random variable satisfying \(\E\left[\Xi\right]=0\) and \(\E\left[\Xi^2\right]=1\). Let \(\boldsymbol{Z}=\left(\Xi_{ij}\right)\) be an \(N\times m\) matrix with independent entries distributed as \(\Xi\). The simplest choice for the added matrix is \(\theta\boldsymbol{e}_1^{\left(N\right)}\left(\boldsymbol{e}_1^{\left(m\right)}\right)^{\top}\), which changes only the upper-left entry. More generally, for deterministic unit vectors \(\boldsymbol{u}\in\R^N\) and \(\boldsymbol{v}\in\R^m\), define
\[
 \boldsymbol{X}:=\frac{\boldsymbol{Z}}{\sqrt{m}},
 \qquad
 \boldsymbol{A}\left(\theta\right):=\boldsymbol{X}+\theta\boldsymbol{u}\boldsymbol{v}^{\top},
 \qquad
 \theta\geq0.
\]
The vectors \(\boldsymbol{u}\) and \(\boldsymbol{v}\) determine the rank-one matrix, and \(\theta\) gives its size.

\subsection{Singular vectors and the error}

For a matrix \(\boldsymbol{B}\), let \(\sigma_j\left(\boldsymbol{B}\right)\) denote its \(j\)-th largest singular value and let \(\left\|\boldsymbol{B}\right\|_{\op}:=\sigma_1\left(\boldsymbol{B}\right)\).

To remove any ambiguity when the largest singular value is repeated, use the same deterministic tie-breaking rule for every matrix: project the coordinate vectors onto the leading left singular subspace, take the first nonzero projection, and normalize it. Let \(\boldsymbol{\hat{u}}\left(\theta\right)\) be the vector returned by this rule for \(\boldsymbol{A}\left(\theta\right)\). The rule is unchanged when a matrix is multiplied by a positive number and is Borel measurable. The resulting vector is jointly measurable in the entries of \(\boldsymbol{Z}\) and in \(\theta\); hence the suprema below are measurable after completing the probability space. We measure the error by
\[
 \mathcal{L}_{N,m}\left(\theta\right)
 :=\theta^2\left[1-\left|\boldsymbol{u}^{\top}\boldsymbol{\hat{u}}\left(\theta\right)\right|^2\right].
\]
The quantity in brackets is the squared norm of the part of \(\boldsymbol{\hat{u}}\left(\theta\right)\) perpendicular to \(\boldsymbol{u}\). Thus \(\mathcal{L}_{N,m}\left(\theta\right)\) records this error after multiplication by \(\theta^2\).

The main result compares this error with the following function. For \(\gamma>0\), define
\[
 \mathcal{R}_{\gamma}\left(\theta\right)
 :=
 \begin{cases}
 \theta^2, & 0\leq\theta\leq\gamma^{1/4},\\[5pt]
 \displaystyle\frac{\gamma\left(1+\theta^2\right)}{\gamma+\theta^2}, & \theta>\gamma^{1/4}.
 \end{cases}
\]

\section{Main results}
\label{sec:main-result}

We now state the main result. Its sufficient part concerns every deterministic sequence of directions, while its converse shows that this statement fails for a particular sequence of coordinate directions when the tail condition is violated.

\begin{theorem}
\label{thm:exact-tail-condition}
Suppose that
\begin{equation}
 \lim_{x\to\infty}x^4\Pr\left\{\left|\Xi\right|>x\right\}=0.
 \label{eq:fourth-tail-condition}
\end{equation}
Then, for every deterministic sequence of unit vectors \(\boldsymbol{u}\in\R^N\) and \(\boldsymbol{v}\in\R^m\),
\begin{equation}
 \sup_{\theta\geq0}\left\{\mathcal{L}_{N,m}\left(\theta\right)-\mathcal{R}_{c_{N,m}}\left(\theta\right)\right\}
 \xrightarrow{\Pr}0.
 \label{eq:uniform-upper-curve}
\end{equation}
Consequently,
\begin{equation}
 \sup_{\theta\geq0}\mathcal{L}_{N,m}\left(\theta\right)
 \xrightarrow{\Pr}c\vee\sqrt{c}.
 \label{eq:largest-error}
\end{equation}

Conversely, suppose that \eqref{eq:fourth-tail-condition} fails. For every \(c\in\left(0,\infty\right)\), take \(N_m=\left\lfloor cm\right\rfloor\), \(\boldsymbol{u}=\boldsymbol{e}_1^{\left(N_m\right)}\), and \(\boldsymbol{v}=\boldsymbol{e}_1^{\left(m\right)}\). Then \eqref{eq:largest-error} fails. More precisely, there exist \(\varepsilon>0\), a fixed \(\theta_0>0\), and a subsequence \(m_k\) such that
\[
 \liminf_{k\to\infty}\Pr\left\{\mathcal{L}_{N_{m_k},m_k}\left(\theta_0\right)>c\vee\sqrt{c}+\varepsilon\right\}>0.
\]
Thus the failure occurs even when \(\theta_0\) is fixed before the matrix is drawn.

If, in addition,
\[
 \Pr\left\{\left|\Xi\right|>x\right\}=x^{-\alpha}\ell\left(x\right),
 \qquad
 2<\alpha<4,
\]
for a slowly varying function \(\ell\), then
\[
 \sup_{\theta\geq0}\mathcal{L}_{N,m}\left(\theta\right)
 \xrightarrow{\Pr}\infty.
\]
\end{theorem}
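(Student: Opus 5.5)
The proof has three parts: sufficiency, the converse at a fixed size, and divergence for regularly varying tails.

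\emph{Sufficiency.} I would first derive \eqref{eq:largest-error} from \eqref{eq:uniform-upper-curve} by computing \(\sup_\theta\mathcal{R}_c\). On \([0,c^{1/4}]\) the maximum of \(\theta^2\) is \(\sqrt c\). For \(\theta>c^{1/4}\), the function \(c(1+\theta^2)/(c+\theta^2)\) is monotone in \(\theta^2\): it increases to \(c\) when \(c>1\) and decreases from \(\sqrt c\) when \(c<1\). Hence \(\sup_\theta\mathcal{R}_c=c\vee\sqrt c\). The matching lower bound comes from fixed-\(\theta\) convergence at a maximizing \(\theta\), or \(\theta\to\infty\) when \(c>1\).

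For \eqref{eq:uniform-upper-curve} I would split \(\theta\) into three ranges.
\begin{itemize}
\item On \([0,\delta]\) the trivial bound \(\mathcal{L}\leq\theta^2\) suffices.
\item On a compact window \([\delta,K]\), use the fixed-\(\theta\) limits for \(|\boldsymbol u^\top\hat{\boldsymbol u}(\theta)|^2\) from \cite[Theorems~2.2--2.3]{Ding2020}. These give \(\theta^2(1-|\boldsymbol u^\top\hat{\boldsymbol u}|^2)\to\mathcal R_c(\theta)\), using that the BBP formula \(1-|\cdot|^2=(c+\theta^2)^{-1}\theta^{-2}\cdots\) matches \(\mathcal R_c\) above \(c^{1/4}\). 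Those theorems are proved under moment conditions. I would transfer them to the tail condition \eqref{eq:fourth-tail-condition} by truncating the entries at \(\epsilon\sqrt m\). Under \eqref{eq:fourth-tail-condition} only \(o(1)\) entries exceed this level, in probability, and the truncated, recentred matrix differs in operator norm by \(o_{\Pr}(1)\), as in \cite[Theorem~2.7]{DingYang2018}. Since the mere presence of an exceptional entry is not small in operator norm, this counting must be made precise.
\item Uniformity over the window follows from a finite grid together with monotonicity and continuity. Davis--Kahan perturbation gives \(\|\hat{\boldsymbol u}(\theta)-\hat{\boldsymbol u}(\theta')\|\lesssim|\theta-\theta'|/\mathrm{gap}\) away from \(c^{1/4}\). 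Near \(c^{1/4}\) the gap vanishes, and I would instead use that \(1-|\boldsymbol u^\top\hat{\boldsymbol u}(\theta)|^2\) is essentially monotone in \(\theta\), via a variational characterization. This step needs care.
\item For \(\theta\geq K\), work directly. Write \(\boldsymbol A=\theta\boldsymbol u\boldsymbol v^\top+\boldsymbol X\). The resolvent expansion gives \(1-|\boldsymbol u^\top\hat{\boldsymbol u}|^2=\theta^{-2}\|P_{\boldsymbol u^\perp}\boldsymbol X\boldsymbol v\|^2+O(\theta^{-3})\), uniformly, with \(\|\boldsymbol X\|_{\op}=O_{\Pr}(1)\). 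The main term satisfies \(\|\boldsymbol X\boldsymbol v\|^2\to c\) by the law of large numbers, which needs only second moments. This matches \(\mathcal R_c(\theta)\to c\). A second-order expansion makes the error \(o(1)\) uniformly for large \(K\).
\end{itemize}

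\emph{Converse.} If \eqref{eq:fourth-tail-condition} fails, there are \(\eta>0\) and \(x_k\to\infty\) with \(x_k^4\Pr\{|\Xi|>x_k\}\geq\eta\). Choose \(m_k\) with \(\sqrt{m_k}\,M\approx x_k\) for a large constant \(M\). Then the probability that some entry exceeds \(M\sqrt{m_k}\) is bounded below by roughly \(1-e^{-c\eta/M^4}\), which is positive. On this event one entry \(X_{ij}\) of size \(\geq M\) dominates, with \(i\neq1\) and \(j\neq1\) with high probability. The rest of the matrix has operator norm \(O(1)\) after removing large entries.

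Take \(\theta_0\) moderate, for example \(\theta_0=2(1\vee c^{1/4})\), with \(M\gg\theta_0\). The top singular direction is then close to \(\boldsymbol e_i\perp\boldsymbol u\), so \(\mathcal L(\theta_0)\approx\theta_0^2\). For a suitable choice this exceeds \(c\vee\sqrt c+\varepsilon\); when \(c\geq1\), the choice \(\theta_0^2>c\) works. The proof needs a perturbation bound showing that a single spike of size \(M\) beats \(\theta_0+\|\text{rest}\|_{\op}\).

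\emph{Regularly varying case.} With \(2<\alpha<4\), the maximal entry of \(\boldsymbol X\) grows like \(m^{2/\alpha-1/2}\to\infty\). Take \(\theta=\theta_m\to\infty\) slowly, below this maximal entry. The spike argument then yields \(\mathcal L(\theta_m)\geq\theta_m^2(1-o(1))\to\infty\).

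I expect the main obstacle to be uniformity in \(\theta\) near the threshold \(c^{1/4}\) under only the weak tail condition.
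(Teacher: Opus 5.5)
Your sufficiency outline follows the paper's skeleton: fixed-$\theta$ limits transferred from Ding's theorems, a grid with Wedin's inequality where the gap is positive, and the large-$\theta$ expansion around $\|(\boldsymbol{I}_N-\boldsymbol{u}\boldsymbol{u}^{\top})\boldsymbol{X}\boldsymbol{v}\|^2$. However, the step you call the main obstacle is left open, and the remedy you suggest does not work. The variational formula $\sigma_1(\boldsymbol{A}(\theta))=\max_{\|\boldsymbol{x}\|=\|\boldsymbol{y}\|=1}\{\boldsymbol{x}^{\top}\boldsymbol{X}\boldsymbol{y}+\theta(\boldsymbol{u}^{\top}\boldsymbol{x})(\boldsymbol{v}^{\top}\boldsymbol{y})\}$ makes $\sigma_1$ convex in $\theta$. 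That gives monotonicity of the product $(\boldsymbol{u}^{\top}\boldsymbol{\hat{u}})(\boldsymbol{v}^{\top}\boldsymbol{\hat{v}})$, not of $|\boldsymbol{u}^{\top}\boldsymbol{\hat{u}}(\theta)|^2$. The latter can fail to be monotone: for $\boldsymbol{X}=\mathrm{diag}(-b,a)$ with $b>a>0$ and $\boldsymbol{u}=\boldsymbol{v}=\boldsymbol{e}_1$, it equals $1$, then $0$ on $(b-a,b+a)$, then $1$ again.

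The missing observation is that \eqref{eq:uniform-upper-curve} is one-sided, so no control across the threshold is needed. You use the trivial bound $\mathcal{L}_{N,m}(\theta)\leq\theta^2$ only on $[0,\delta]$. It already gives $\mathcal{L}_{N,m}-\mathcal{R}_{c_{N,m}}\leq0$ on all of $[0,c_{N,m}^{1/4}]$. Above that point it gives $\mathcal{L}_{N,m}-\mathcal{R}_{c_{N,m}}\leq(\theta^4-c_{N,m})/(c_{N,m}+\theta^2)$, which is uniformly small on $[c_{N,m}^{1/4},c_{N,m}^{1/4}+\delta]$. The grid argument is then needed only on a fixed interval $[c^{1/4}+\delta/2,M]$, where $\rho_c(\theta)-(1+\sqrt{c})$ is bounded below. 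The lower bound in \eqref{eq:largest-error} needs only a pointwise limit at one fixed $\theta$, or along $\theta\to\infty$ when $c>1$.

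Your transfer step is also incomplete. Truncating at $\epsilon\sqrt{m}$ changes no entry with probability tending to one, so exceptional entries are not the issue. What is unclear is whether the truncated entries satisfy Ding's hypotheses, since their moments beyond the fourth may grow with $m$. The paper instead truncates at a fixed level $K$, which gives bounded entries. It then applies Ding--Yang's Theorem~2.7 to the normalized difference matrix, whose entries $\Xi-\Xi^{(K)}$ inherit \eqref{eq:fourth-tail-condition}. This yields $\|\boldsymbol{X}-\boldsymbol{X}^{(K)}\|_{\op}\to\tau_K(1+\sqrt{c})$, which is small for large $K$.

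The converse has a second gap. Your mechanism has three parts: a single dominating entry, a remainder of operator norm $O(1)$, and a perturbation bound showing that the spike beats $\theta_0+\|\text{rest}\|_{\op}$. None of this is available when \eqref{eq:fourth-tail-condition} merely fails. If $\Pr\{|\Xi|>x\}\asymp x^{-\alpha}$ with $2<\alpha<4$, about $m^{2-\alpha/2}\to\infty$ entries exceed $M\sqrt{m}$, the largest of them are of comparable size, and $\|\boldsymbol{X}\|_{\op}\to\infty$. So neither dominance nor an $O(1)$ remainder holds. Your regularly varying argument reuses the spike argument and inherits the same problem.

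The missing idea is that the remainder never needs to be controlled. Let $T$ be the largest $|\Xi_{ij}|/\sqrt{m}$ over rows $i\geq2$. These entries are unchanged by adding $\theta\boldsymbol{e}_1\boldsymbol{v}^{\top}$, so $S:=\sigma_1(\boldsymbol{A}(\theta))\geq T$. The first coordinate of $\boldsymbol{A}(\theta)\boldsymbol{\hat{v}}=S\boldsymbol{\hat{u}}$ gives $S|\boldsymbol{e}_1^{\top}\boldsymbol{\hat{u}}|\leq\|\boldsymbol{e}_1^{\top}\boldsymbol{A}(\theta)\|\leq\theta+R$. Here $R$ is the norm of the first row of $\boldsymbol{X}$; it is independent of $T$ and tends to $1$ in probability. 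Hence $|\boldsymbol{e}_1^{\top}\boldsymbol{\hat{u}}|\leq(\theta+R)/T$.

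With $\theta_0=K/2$, on the event $\{T>K,\ R\leq K/4\}$ this yields $\mathcal{L}_{N_k,m_k}(\theta_0)\geq7K^2/64$. Your exceedance count already bounds $\Pr\{T>K\}$ from below. Under regular variation, $T\to\infty$ in probability. The same bound then gives divergence, either at $\theta=T/2$ or at a deterministic $\theta_m\to\infty$ growing slowly enough that $\theta_m/T\to0$ in probability.
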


\begin{remark}
Finite fourth moment implies \eqref{eq:fourth-tail-condition}, because
\[
 x^4\Pr\left\{\left|\Xi\right|>x\right\}
 \leq\E\left[\left|\Xi\right|^4\1\left\{\left|\Xi\right|>x\right\}\right]
 \rightarrow0.
\]
Condition \eqref{eq:fourth-tail-condition} is weaker than the existence of a finite fourth moment.
\end{remark}

\begin{remark}
The condition is exact for the statement that \eqref{eq:largest-error} holds for every deterministic sequence of directions. To disprove that statement when the condition fails, it is enough to give one sequence, and the theorem uses the two coordinate directions.
\end{remark}

\subsection{A fixed value of \texorpdfstring{\(\theta\)}{theta}}
\label{subsec:fixed-theta}

For \(\theta>c^{1/4}\), define
\[
 \rho_c\left(\theta\right)
 :=\frac{\sqrt{\left(1+\theta^2\right)\left(c+\theta^2\right)}}{\theta},
 \qquad
 a_c\left(\theta\right)
 :=\frac{\theta^4-c}{\theta^2\left(\theta^2+c\right)}.
\]
These functions satisfy
\[
 \theta^2\left[1-a_c\left(\theta\right)\right]
 =\frac{c\left(1+\theta^2\right)}{c+\theta^2}
 =\mathcal{R}_c\left(\theta\right).
\]

\begin{lemma}
\label{lem:fixed-theta}
Under \eqref{eq:fourth-tail-condition},
\[
 \left\|\boldsymbol{X}\right\|_{\op}\xrightarrow{\Pr}1+\sqrt{c}.
\]
For every fixed \(\theta>c^{1/4}\),
\[
 \sigma_1\left(\boldsymbol{A}\left(\theta\right)\right)\xrightarrow{\Pr}\rho_c\left(\theta\right),
 \qquad
 \sigma_2\left(\boldsymbol{A}\left(\theta\right)\right)\xrightarrow{\Pr}1+\sqrt{c},
\]
and
\[
 \left|\boldsymbol{u}^{\top}\boldsymbol{\hat{u}}\left(\theta\right)\right|^2
 \xrightarrow{\Pr}a_c\left(\theta\right).
\]
Consequently, \(\mathcal{L}_{N,m}\left(\theta\right)\xrightarrow{\Pr}\mathcal{R}_c\left(\theta\right)\).
\end{lemma}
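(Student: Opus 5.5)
The plan is to reduce everything to results in the cited literature, using a truncation argument to pass from the fourth-moment setting to condition \eqref{eq:fourth-tail-condition}.

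\textbf{Step 1: the operator norm.} The convergence \(\|\boldsymbol{X}\|_{\op}\to1+\sqrt{c}\) in probability under \eqref{eq:fourth-tail-condition} is exactly the sufficiency half of \cite[Theorem~2.7]{DingYang2018}. I would invoke it directly and only check the normalization: entries of variance \(1/m\), and the ratio \(N/m\to c\).

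\textbf{Step 2: the spiked quantities at fixed \(\theta\).} For the spiked quantities at fixed \(\theta>c^{1/4}\), I would appeal to \cite[Theorems~2.2--2.3]{Ding2020}. These results give the outlier location \(\rho_c(\theta)\), the sticking of \(\sigma_2\) to the edge, and the overlap \(a_c(\theta)\) for general deterministic unit \(\boldsymbol{u},\boldsymbol{v}\). They are stated under moment conditions that are stronger than \eqref{eq:fourth-tail-condition}, so I would bridge the gap by truncation, as in \cite{DingYang2018}.

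Write \(\Xi=\Xi\1\{|\Xi|\le m^{1/2}\delta_m\}+\Xi\1\{|\Xi|>m^{1/2}\delta_m\}\), with \(\delta_m\to0\) slowly. Under \eqref{eq:fourth-tail-condition},
\[
Nm\,\Pr\{|\Xi|>m^{1/2}\delta_m\}=o(1)
\]
for a suitable choice of \(\delta_m\). Hence, with probability tending to one, \(\boldsymbol{Z}\) equals its truncated version. The truncated entries are then recentred and rescaled. Recentring costs a rank-one perturbation \(\mu_m\1\1^\top/\sqrt m\), whose operator norm is
\[
|\mu_m|\sqrt{Nm/m}\le \sqrt N\,\E|\Xi|\1\{|\Xi|>m^{1/2}\delta_m\}\le \sqrt N\, m^{-3/2}\delta_m^{-3}\E|\Xi|^4\text{-type bound}\to0.
\]
Strictly, one uses the tail condition through \(\int_{m^{1/2}\delta_m}^\infty \Pr\{|\Xi|>x\}\,dx = o(m^{-3/2}\delta_m^{-3})\). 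Rescaling the variance changes the norm by a factor \(1+o(1)\).

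The truncated, recentred matrix has bounded support of order \(m^{1/2}\delta_m\). Such matrices fall under the \emph{sparse/bounded-support} local laws used in \cite{DingYang2018,Ding2020}, so the outlier and eigenvector results apply to it. Singular values are 1-Lipschitz in operator norm. The leading singular vector is stable under perturbations of norm \(o(1)\), by Wedin's theorem with the gap \(\rho_c(\theta)-(1+\sqrt c)>0\). These two facts transfer all three limits back to \(\boldsymbol{A}(\theta)\).

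\textbf{Step 3: the loss.} The tie-breaking rule is irrelevant here, since \(\sigma_1>\sigma_2\) with probability tending to one. The final claim then follows from the identity \(\theta^2[1-a_c(\theta)]=\mathcal{R}_c(\theta)\) and the continuous mapping theorem.

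\textbf{Main obstacle.} I expect the main difficulty to be checking that the eigenvector statement of \cite{Ding2020} holds for the truncated bounded-support ensemble uniformly over arbitrary deterministic \(\boldsymbol{u},\boldsymbol{v}\), including delocalized versus coordinate directions. If the cited result needs more than the support bound \(m^{1/2}\delta_m\), I would fall back on a self-contained argument. This argument would use the isotropic convergence of
\[
\boldsymbol{u}^\top(\boldsymbol{X}\boldsymbol{X}^\top-z)^{-1}\boldsymbol{u}\quad\text{and}\quad\boldsymbol{v}^\top(\boldsymbol{X}^\top\boldsymbol{X}-z)^{-1}\boldsymbol{v}
\]
for real \(z\) outside \([0,(1+\sqrt c)^2+\eta]\). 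This convergence follows from a second-moment computation that only needs bounded fourth moment after truncation up to \(\log\) factors. It would be combined with the standard \(2\times2\) determinant equation for the outlier and the derivative formula for the overlap.
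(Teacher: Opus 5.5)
\textbf{There is a genuine gap in Step~2.} It sits where you assert that the truncated, recentered matrix falls under the bounded-support local laws, so that \cite[Theorems~2.2--2.3]{Ding2020} apply to it.

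After an exact truncation, the normalized entries are bounded only by \(\delta_m\). Under \eqref{eq:fourth-tail-condition} alone, \(\delta_m\) must in general decay arbitrarily slowly. The truncated variable also has, in general, no uniform bound on moments of order four and higher. That puts it outside the setting in which the spiked results of \cite{Ding2020} are stated. It is also outside the bounded-support regime of \cite{DingYang2018}, whose local laws need support of order \(N^{-\phi}\) for a fixed \(\phi>0\).

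Taking \(\delta_m=m^{-\phi}\) does not help. Then \(Nm\Pr\{|\Xi|>m^{1/2-\phi}\}\) is only \(o(m^{4\phi})\), so the truncation is no longer exact with high probability. This is exactly the difficulty that \cite{DingYang2018} handles by a separate treatment of the sparse large entries.

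You correctly flag this as the main obstacle, but the fallback is only asserted. An isotropic resolvent law at real \(z\) combined with the \(2\times2\) determinant equation would amount to a self-contained proof of the rank-one outlier and overlap limits. It would have to cover arbitrary deterministic \(\boldsymbol{u},\boldsymbol{v}\) under near-fourth-moment conditions.

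\textbf{The missing idea} is that the truncation need not be exact. It only has to be small in operator norm, and \cite[Theorem~2.7]{DingYang2018} itself supplies that smallness.

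The paper truncates at a fixed level \(K\), then recenters and rescales. So \(\Xi^{(K)}\) is genuinely bounded, and \cite{Ding2020} applies verbatim to \(\boldsymbol{X}^{(K)}+\theta\boldsymbol{u}\boldsymbol{v}^{\top}\).

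The remainder \(\Delta_K=\Xi-\Xi^{(K)}\) is centered with variance \(\tau_K^2\to0\). The variable \(\Delta_K/\tau_K\) still satisfies \eqref{eq:fourth-tail-condition}, because \(\Pr\{|\Delta_K|>x\}\leq\Pr\{|\Xi|>x/2\}\) for large \(x\). Theorem~2.7 applied to it gives \(\|\boldsymbol{X}-\boldsymbol{X}^{(K)}\|_{\op}\xrightarrow{\Pr}\tau_K(1+\sqrt{c})\).

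Your Weyl--Wedin transfer, using the gap \(\rho_c(\theta)-(1+\sqrt{c})>0\), then completes the argument: let \(N,m\to\infty\) first and then \(K\to\infty\). Steps~1 and~3 of your proposal are fine as written.
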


\begin{proof}
The first conclusion follows by applying \cite[Theorem~2.7, p.~1687]{DingYang2018} with the identity population matrix. Under the present normalization, its tail assumption is \eqref{eq:fourth-tail-condition}, and the largest eigenvalue of \(\boldsymbol{X}\boldsymbol{X}^{\top}\) converges to \(\left(1+\sqrt{c}\right)^2\). Taking square roots gives \(\sigma_1\left(\boldsymbol{X}\right)\xrightarrow{\Pr}1+\sqrt{c}\).

We next reduce the remaining conclusions to bounded entries. For \(K>0\), let
\[
 \mu_K:=\E\left[\Xi\1\left\{\left|\Xi\right|\leq K\right\}\right],
 \qquad
 s_K^2:=\operatorname{Var}\left(\Xi\1\left\{\left|\Xi\right|\leq K\right\}\right),
\]
and, for all sufficiently large \(K\), set
\[
 \Xi^{\left(K\right)}
 :=\frac{\Xi\1\left\{\left|\Xi\right|\leq K\right\}-\mu_K}{s_K}.
\]
Then \(\Xi^{\left(K\right)}\) is centered, has variance one, is bounded, and converges to \(\Xi\) in \(L^2\) as \(K\to\infty\).

Apply this truncation and rescaling to every entry of \(\boldsymbol{Z}\), and let \(\boldsymbol{X}^{\left(K\right)}\) be the resulting matrix divided by \(\sqrt{m}\). Put \(\boldsymbol{D}^{\left(K\right)}:=\boldsymbol{X}-\boldsymbol{X}^{\left(K\right)}\). The entries of \(\sqrt{m}\boldsymbol{D}^{\left(K\right)}\) are independent copies of
\[
 \Delta_K:=\Xi-\Xi^{\left(K\right)},
 \qquad
 \E\left[\Delta_K\right]=0,
 \qquad
 \tau_K^2:=\E\left[\Delta_K^2\right]\rightarrow0.
\]
If \(\tau_K=0\), then \(\boldsymbol{D}^{\left(K\right)}=\boldsymbol{0}\) almost surely. If \(\tau_K>0\), the random variable \(\Delta_K/\tau_K\) also satisfies \eqref{eq:fourth-tail-condition}. Indeed, \(\Xi^{\left(K\right)}\) is bounded, so, for all sufficiently large \(x\),
\[
 \Pr\left\{\left|\Delta_K\right|>x\right\}
 \leq\Pr\left\{\left|\Xi\right|>x/2\right\}.
\]
Theorem~2.7 in \cite{DingYang2018} therefore gives, in either case,
\[
 \left\|\boldsymbol{D}^{\left(K\right)}\right\|_{\op}
 \xrightarrow{\Pr}\tau_K\left(1+\sqrt{c}\right)
\]
for every fixed \(K\). Consequently, for every \(\eta>0\),
\begin{equation}
 \lim_{K\to\infty}\limsup_{N,m\to\infty}
 \Pr\left\{\left\|\boldsymbol{X}-\boldsymbol{X}^{\left(K\right)}\right\|_{\op}>\eta\right\}=0.
 \label{eq:operator-approximation}
\end{equation}

Since \(\Xi^{\left(K\right)}\) is bounded, the assumptions of \cite[Theorems~2.2--2.3, pp.~392--393]{Ding2020} hold for
\[
 \boldsymbol{A}^{\left(K\right)}\left(\theta\right)
 :=\boldsymbol{X}^{\left(K\right)}+\theta\boldsymbol{u}\boldsymbol{v}^{\top}.
\]
To translate the notation, write Ding's dimensions as \(M_{\mathrm D}\times N_{\mathrm D}\), with entries normalized to have variance \(1/N_{\mathrm D}\), and let \(c_{\mathrm D}:=N_{\mathrm D}/M_{\mathrm D}\). Here \(M_{\mathrm D}=N\), \(N_{\mathrm D}=m\), and therefore \(c_{\mathrm D}=c_{N,m}^{-1}\); the added singular value in that paper is \(d=\theta\). With this substitution, the outlier location in equation~(2.6) of that paper converges to \(\rho_c\left(\theta\right)^2\), and the left singular-vector expression in equation~(2.9) becomes \(a_c\left(\theta\right)\). Because \(\theta>c^{1/4}\) is fixed, it remains above \(c_{\mathrm D}^{-1/4}\) for all sufficiently large dimensions, and the separation condition between different added singular values is automatic in rank one. Therefore,
\[
 \sigma_1\left(\boldsymbol{A}^{\left(K\right)}\left(\theta\right)\right)
 \xrightarrow{\Pr}\rho_c\left(\theta\right)
\]
and, for a leading left singular vector \(\boldsymbol{\hat{u}}^{\left(K\right)}\left(\theta\right)\),
\[
 \left|\boldsymbol{u}^{\top}\boldsymbol{\hat{u}}^{\left(K\right)}\left(\theta\right)\right|^2
 \xrightarrow{\Pr}a_c\left(\theta\right).
\]

The second singular value requires a separate argument. Applied to the eigenvalues of \(\boldsymbol{X}^{\left(K\right)}\left(\boldsymbol{X}^{\left(K\right)}\right)^{\top}\), \cite[Lemma~4.12, p.~405]{Ding2020}, together with the fixed-index classical locations defined on that page, gives, after taking square roots,
\[
 \sigma_j\left(\boldsymbol{X}^{\left(K\right)}\right)
 \xrightarrow{\Pr}1+\sqrt{c},
 \qquad
 j=1,2,3.
\]
For all sufficiently large dimensions, \(\boldsymbol{X}^{\left(K\right)}\) has at least three singular values. Since \(\boldsymbol{A}^{\left(K\right)}\left(\theta\right)-\boldsymbol{X}^{\left(K\right)}\) has rank one, singular-value interlacing gives
\[
 \sigma_3\left(\boldsymbol{X}^{\left(K\right)}\right)
 \leq
 \sigma_2\left(\boldsymbol{A}^{\left(K\right)}\left(\theta\right)\right)
 \leq
 \sigma_1\left(\boldsymbol{X}^{\left(K\right)}\right).
\]
Consequently,
\[
 \sigma_2\left(\boldsymbol{A}^{\left(K\right)}\left(\theta\right)\right)
 \xrightarrow{\Pr}1+\sqrt{c}.
\]
The limiting first singular value is strictly larger, since
\[
 \rho_c\left(\theta\right)^2-\left(1+\sqrt{c}\right)^2
 =\frac{\left(\theta^2-\sqrt{c}\right)^2}{\theta^2}>0.
\]

Let
\[
 d_{c,\theta}:=\rho_c\left(\theta\right)-\left(1+\sqrt{c}\right)>0.
\]
Fix \(0<\eta<d_{c,\theta}/4\). By \eqref{eq:operator-approximation}, \(K\) can be chosen so large that
\[
 \limsup_{N,m\to\infty}
 \Pr\left\{\left\|\boldsymbol{X}-\boldsymbol{X}^{\left(K\right)}\right\|_{\op}>\eta\right\}
\]
is arbitrarily small. For this fixed \(K\), the first two singular values of \(\boldsymbol{A}^{\left(K\right)}\left(\theta\right)\) differ by more than \(d_{c,\theta}/2\) with probability tending to one. Weyl's inequality shows that the corresponding singular values of \(\boldsymbol{A}\left(\theta\right)\) are within \(\eta\) of them. Wedin's inequality then gives, on the intersection of these events,
\[
 \min_{\varsigma\in\left\{-1,1\right\}}
 \left\|\boldsymbol{\hat{u}}\left(\theta\right)-\varsigma\boldsymbol{\hat{u}}^{\left(K\right)}\left(\theta\right)\right\|
 \leq C_{c,\theta}\left\|\boldsymbol{X}-\boldsymbol{X}^{\left(K\right)}\right\|_{\op}.
\]
First let \(N,m\to\infty\) with \(K\) fixed, and then let \(K\to\infty\). Weyl's inequality gives the two singular-value limits for \(\boldsymbol{A}\left(\theta\right)\), and the last display gives the stated limit for the square of the inner product. The conclusion for \(\mathcal{L}_{N,m}\left(\theta\right)\) follows from its definition.
\end{proof}

\subsection{From fixed values to all values of \texorpdfstring{\(\theta\)}{theta}}
\label{subsec:all-theta}

\begin{lemma}
\label{lem:bounded-uniformity}
For every closed bounded interval \(I\subset\left(c^{1/4},\infty\right)\),
\[
 \sup_{\theta\in I}\left|\mathcal{L}_{N,m}\left(\theta\right)-\mathcal{R}_{c_{N,m}}\left(\theta\right)\right|
 \xrightarrow{\Pr}0.
\]
\end{lemma}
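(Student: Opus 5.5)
Fix a closed interval \(I=[\theta_-,\theta_+]\subset\left(c^{1/4},\infty\right)\). The plan is to upgrade the pointwise convergence of Lemma~\ref{lem:fixed-theta} to uniform convergence on \(I\) in two ways: by a finite grid, and by a deterministic Lipschitz-type control of \(\theta\mapsto\left|\boldsymbol{u}^{\top}\boldsymbol{\hat{u}}\left(\theta\right)\right|^2\) that holds on a high-probability event where the top singular value of \(\boldsymbol{A}\left(\theta\right)\) is separated from the rest uniformly in \(\theta\in I\).

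First, control the deterministic part. Since \(c_{N,m}\to c\) and \(\theta_->c^{1/4}\), for large dimensions \(I\subset\left(c_{N,m}^{1/4},\infty\right)\). On this range \(\mathcal{R}_\gamma\left(\theta\right)=\gamma\left(1+\theta^2\right)/\left(\gamma+\theta^2\right)\) is jointly continuous in \(\left(\gamma,\theta\right)\), so \(\sup_{\theta\in I}\left|\mathcal{R}_{c_{N,m}}\left(\theta\right)-\mathcal{R}_c\left(\theta\right)\right|\to0\). Hence it suffices to prove \(\sup_{\theta\in I}\left|\mathcal{L}_{N,m}\left(\theta\right)-\mathcal{R}_c\left(\theta\right)\right|\xrightarrow{\Pr}0\). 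Also \(\mathcal{R}_c\) is uniformly continuous on \(I\), and \(\theta^2\) is bounded there by \(\theta_+^2\), so it is enough to treat \(\left|\boldsymbol{u}^{\top}\boldsymbol{\hat{u}}\left(\theta\right)\right|^2\) against \(a_c\left(\theta\right)\).

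Second, build the separation event. Set \(\delta:=\min_{\theta\in I}\left[\rho_c\left(\theta\right)-\left(1+\sqrt{c}\right)\right]>0\), which is positive by the identity \(\rho_c^2-\left(1+\sqrt c\right)^2=\left(\theta^2-\sqrt c\right)^2/\theta^2\) and compactness. Choose a grid \(\theta_-=t_0<t_1<\dots<t_J=\theta_+\) with mesh \(h\) small compared to \(\delta\). By Lemma~\ref{lem:fixed-theta} applied at the finitely many \(t_j\), with probability tending to one we have simultaneously \(\left|\sigma_1\left(\boldsymbol{A}\left(t_j\right)\right)-\rho_c\left(t_j\right)\right|\leq\eta\), \(\sigma_2\left(\boldsymbol{A}\left(t_j\right)\right)\leq1+\sqrt c+\eta\), and \(\left|\left|\boldsymbol{u}^{\top}\boldsymbol{\hat{u}}\left(t_j\right)\right|^2-a_c\left(t_j\right)\right|\leq\eta\). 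For \(\theta\in\left[t_j,t_{j+1}\right]\), \(\left\|\boldsymbol{A}\left(\theta\right)-\boldsymbol{A}\left(t_j\right)\right\|_{\op}=\left|\theta-t_j\right|\leq h\), so Weyl's inequality gives a gap \(\sigma_1\left(\boldsymbol{A}\left(\theta\right)\right)-\sigma_2\left(\boldsymbol{A}\left(\theta\right)\right)\geq\delta-2\eta-2h\geq\delta/2\) for all \(\theta\in I\) on this event. In particular the top singular value is simple, so the tie-breaking rule plays no role.

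Third, on this event Wedin's \(\sin\Theta\) inequality applied to the pair \(\boldsymbol{A}\left(t_j\right),\boldsymbol{A}\left(\theta\right)\) yields \(\min_{\varsigma=\pm1}\left\|\boldsymbol{\hat{u}}\left(\theta\right)-\varsigma\boldsymbol{\hat{u}}\left(t_j\right)\right\|\leq Ch/\delta\), hence \(\left|\left|\boldsymbol{u}^{\top}\boldsymbol{\hat{u}}\left(\theta\right)\right|^2-\left|\boldsymbol{u}^{\top}\boldsymbol{\hat{u}}\left(t_j\right)\right|^2\right|\leq2Ch/\delta\) (the squared inner product is insensitive to the sign). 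Combining with the grid estimate and the modulus of continuity \(\omega\) of \(a_c\) on \(I\) gives \(\sup_{\theta\in I}\left|\left|\boldsymbol{u}^{\top}\boldsymbol{\hat{u}}\left(\theta\right)\right|^2-a_c\left(\theta\right)\right|\leq\eta+2Ch/\delta+\omega\left(h\right)\) with probability tending to one; letting \(h,\eta\downarrow0\) finishes. The main obstacle is precisely ensuring the gap persists between grid points, i.e.\ that the Wedin step is applied with a denominator bounded below uniformly; I handle it by choosing \(h\) after \(\delta\), which works because perturbations in \(\theta\) are deterministic rank-one with operator norm exactly \(\left|\theta-t_j\right|\).
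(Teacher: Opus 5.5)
Your proposal is correct and follows essentially the same route as the paper: apply Lemma~\ref{lem:fixed-theta} simultaneously on a finite grid, use Weyl's inequality to carry the spectral gap between grid points (since \(\boldsymbol{A}(\theta)-\boldsymbol{A}(t_j)\) has operator norm \(|\theta-t_j|\)), and use Wedin's inequality to control \(\boldsymbol{\hat{u}}(\theta)\) between grid points. The only difference is cosmetic: you first replace \(\mathcal{R}_{c_{N,m}}\) by \(\mathcal{R}_c\) and compare \(|\boldsymbol{u}^{\top}\boldsymbol{\hat{u}}(\theta)|^2\) with \(a_c(\theta)\), whereas the paper compares \(\mathcal{L}_{N,m}\) with \(\mathcal{R}_{c_{N,m}}\) directly, using a mean-value bound on \(\mathcal{R}_{c_{N,m}}\).
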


\begin{proof}
Write \(I=\left[a,b\right]\). The function
\[
 g_c\left(\theta\right):=\rho_c\left(\theta\right)-\left(1+\sqrt{c}\right)
\]
is continuous and strictly positive on \(I\). Hence \(g_*:=\inf_{\theta\in I}g_c\left(\theta\right)>0\).

Fix \(h<g_*/8\), and choose finitely many points \(\theta_1,\ldots,\theta_J\) such that every \(\theta\in I\) satisfies \(\left|\theta-\theta_j\right|\leq h\) for some \(j\). By Lemma~\ref{lem:fixed-theta}, with probability tending to one, all the selected points satisfy
\[
 \sigma_1\left(\boldsymbol{A}\left(\theta_j\right)\right)
 -\sigma_2\left(\boldsymbol{A}\left(\theta_j\right)\right)
 \geq\frac{g_*}{2}.
\]
For \(\left|\theta-\theta_j\right|\leq h\),
\[
 \left\|\boldsymbol{A}\left(\theta\right)-\boldsymbol{A}\left(\theta_j\right)\right\|_{\op}
 =\left|\theta-\theta_j\right|\leq h.
\]
Weyl's inequality therefore gives
\[
 \sigma_1\left(\boldsymbol{A}\left(\theta\right)\right)
 -\sigma_2\left(\boldsymbol{A}\left(\theta\right)\right)
 \geq\frac{g_*}{2}-2h
 \geq\frac{g_*}{4}.
\]
Wedin's inequality then gives, on the preceding event,
\[
 \min_{\varsigma\in\left\{-1,1\right\}}
 \left\|\boldsymbol{\hat{u}}\left(\theta\right)-\varsigma\boldsymbol{\hat{u}}\left(\theta_j\right)\right\|
 \leq C_I h
\]
uniformly over \(\theta\in I\), where \(C_I\) depends only on \(I\). Since \(\theta\) is bounded on \(I\),
\[
 \left|\mathcal{L}_{N,m}\left(\theta\right)-\mathcal{L}_{N,m}\left(\theta_j\right)\right|
 \leq C_I h.
\]
For all sufficiently large \(N,m\), the derivatives of \(\mathcal{R}_{c_{N,m}}\) are bounded on \(I\) by a constant depending only on \(I\). The mean value theorem therefore gives
\[
 \left|\mathcal{R}_{c_{N,m}}\left(\theta\right)-\mathcal{R}_{c_{N,m}}\left(\theta_j\right)\right|
 \leq C_I h.
\]
At the finitely many selected points, Lemma~\ref{lem:fixed-theta} and \eqref{eq:dimension-ratio} imply
\[
 \max_{1\leq j\leq J}
 \left|\mathcal{L}_{N,m}\left(\theta_j\right)-\mathcal{R}_{c_{N,m}}\left(\theta_j\right)\right|
 \xrightarrow{\Pr}0.
\]
Letting first \(N,m\to\infty\) and then \(h\downarrow0\) proves the claim.
\end{proof}

\begin{lemma}
\label{lem:large-theta}
Put
\[
 W:=\left\|\boldsymbol{X}\right\|_{\op},
 \qquad
 Q_{N,m}:=\left\|\left(\boldsymbol{I}_N-\boldsymbol{u}\boldsymbol{u}^{\top}\right)\boldsymbol{X}\boldsymbol{v}\right\|^2.
\]
There is a constant \(C\) such that, whenever \(\theta\geq3W\),
\begin{equation}
 \min_{\varsigma\in\left\{-1,1\right\}}
 \left\|\theta\left(\boldsymbol{I}_N-\boldsymbol{u}\boldsymbol{u}^{\top}\right)\boldsymbol{\hat{u}}\left(\theta\right)
 -\varsigma\left(\boldsymbol{I}_N-\boldsymbol{u}\boldsymbol{u}^{\top}\right)\boldsymbol{X}\boldsymbol{v}\right\|
 \leq\frac{CW^2}{\theta}.
 \label{eq:large-theta-vector}
\end{equation}
Consequently, on the event \(M\geq3W\),
\begin{equation}
 \sup_{\theta\geq M}\left|\mathcal{L}_{N,m}\left(\theta\right)-Q_{N,m}\right|
 \leq C\left(\frac{W^3}{M}+\frac{W^4}{M^2}\right).
 \label{eq:large-theta-error}
\end{equation}
Moreover,
\begin{equation}
 Q_{N,m}-c_{N,m}\xrightarrow{\Pr}0.
 \label{eq:q-convergence}
\end{equation}
\end{lemma}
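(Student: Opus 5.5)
The plan is to treat the large-\(\theta\) regime as a perturbation of the rank-one matrix \(\theta\boldsymbol{u}\boldsymbol{v}^{\top}\), whose leading left singular vector is exactly \(\boldsymbol{u}\). I will work with the first-order eigenvector expansion of \(\boldsymbol{A}(\theta)\boldsymbol{A}(\theta)^{\top}\). Divide this matrix by \(\theta^2\) and write
\[
\frac{1}{\theta^2}\boldsymbol{A}\boldsymbol{A}^{\top}=\boldsymbol{u}\boldsymbol{u}^{\top}+\frac{1}{\theta}\left(\boldsymbol{u}\boldsymbol{v}^{\top}\boldsymbol{X}^{\top}+\boldsymbol{X}\boldsymbol{v}\boldsymbol{u}^{\top}\right)+\frac{1}{\theta^2}\boldsymbol{X}\boldsymbol{X}^{\top}=:\boldsymbol{u}\boldsymbol{u}^{\top}+\boldsymbol{E}.
\]
Here \(\|\boldsymbol{E}\|_{\op}\leq 2W/\theta+W^2/\theta^2\leq 7/9\) when \(\theta\geq3W\). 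The tie-breaking rule is scale invariant, so \(\boldsymbol{\hat{u}}(\theta)\) is a top eigenvector of this matrix. Choose the sign \(\varsigma\) so that \(\varsigma\boldsymbol{u}^{\top}\boldsymbol{\hat u}\geq0\). Let \(P:=\boldsymbol{I}_N-\boldsymbol{u}\boldsymbol{u}^{\top}\) and write \(\boldsymbol{\hat u}=\alpha\boldsymbol{u}+\boldsymbol{w}\) with \(\boldsymbol{w}=P\boldsymbol{\hat u}\). Projecting the eigen-equation \((\boldsymbol{u}\boldsymbol{u}^{\top}+\boldsymbol{E})\boldsymbol{\hat u}=\lambda\boldsymbol{\hat u}\) by \(P\) gives
\[
\lambda\boldsymbol{w}=P\boldsymbol{E}\boldsymbol{\hat u}=\frac{\alpha}{\theta}P\boldsymbol{X}\boldsymbol{v}+\frac{1}{\theta^2}P\boldsymbol{X}\boldsymbol{X}^{\top}\boldsymbol{\hat u},
\]
because \(P\boldsymbol{u}=0\) kills the term \(\boldsymbol{u}\boldsymbol{v}^{\top}\boldsymbol{X}^{\top}\).

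The key steps follow from this identity in order. First, by Weyl's inequality \(|\lambda-1|\leq\|\boldsymbol{E}\|_{\op}\leq 2W/\theta+W^2/\theta^2\); in particular \(\lambda\) is bounded below by a positive constant. Second, the crude bound \(\|\boldsymbol{w}\|\leq(W/\theta+W^2/\theta^2)/\lambda\leq CW/\theta\) follows from the identity, and hence \(1-\alpha^2=\|\boldsymbol{w}\|^2\leq CW^2/\theta^2\) and \(1-\alpha\leq CW^2/\theta^2\). Third, multiply the identity by \(\theta\):
\[
\theta\boldsymbol{w}-P\boldsymbol{X}\boldsymbol{v}=\frac{\alpha-\lambda}{\lambda}P\boldsymbol{X}\boldsymbol{v}+\frac{1}{\lambda\theta}P\boldsymbol{X}\boldsymbol{X}^{\top}\boldsymbol{\hat u}.
\]
Here \(|\alpha-\lambda|\leq CW/\theta\), \(\|P\boldsymbol{X}\boldsymbol{v}\|\leq W\), and the last term is at most \(CW^2/\theta\). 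This gives \eqref{eq:large-theta-vector}, with \(\varsigma\) absorbed into the sign choice.

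The main obstacle is that the \(1/\theta\) terms in \(\lambda-1\) must not spoil the \(W^2/\theta\) rate. The first-order correction to \(\lambda\) is \(2\boldsymbol{u}^{\top}\boldsymbol{X}\boldsymbol{v}/\theta\), of size \(W/\theta\), and multiplying it by \(\|P\boldsymbol{X}\boldsymbol{v}\|\leq W\) still yields \(W^2/\theta\). The same holds for \(1-\alpha\). So the bound closes, provided all constants are tracked uniformly using only \(\theta\geq3W\).

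For \eqref{eq:large-theta-error}, note that \(\mathcal{L}_{N,m}(\theta)=\|\theta P\boldsymbol{\hat u}\|^2\). Write \(x=\theta\boldsymbol{w}\) and \(y=\varsigma P\boldsymbol{X}\boldsymbol{v}\), with \(\|y\|\leq W\). Then
\[
\left|\|x\|^2-\|y\|^2\right|\leq\|x-y\|\left(2\|y\|+\|x-y\|\right)\leq\frac{CW^2}{\theta}\left(2W+\frac{CW^2}{\theta}\right).
\]
The right side is decreasing in \(\theta\), so taking the supremum over \(\theta\geq M\geq3W\) gives the bound. Finally, for \eqref{eq:q-convergence}, expand
\[
Q_{N,m}=\|\boldsymbol{X}\boldsymbol{v}\|^2-(\boldsymbol{u}^{\top}\boldsymbol{X}\boldsymbol{v})^2.
\]
The vector \(\sqrt m\,\boldsymbol{X}\boldsymbol{v}\) has independent coordinates \(\sum_j\Xi_{ij}v_j\), each of mean zero and variance one. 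Hence \(\E(\boldsymbol{u}^{\top}\boldsymbol{X}\boldsymbol{v})^2=1/m\to0\), and \(\E\|\boldsymbol{X}\boldsymbol{v}\|^2=N/m=c_{N,m}\). I then need a weak law for \(\frac1m\sum_{i\leq N}Y_i^2\) with \(Y_i\) independent and mean one, but only uniformly integrable, not in \(L^2\). I would establish uniform integrability of \(Y_i^2\) from the \(L^2\) convergence in the truncation step, or directly by truncating \(\Xi\) at a level \(K\). Then a truncated Chebyshev argument, as in the standard weak law for triangular arrays with uniformly integrable summands, gives \(\|\boldsymbol{X}\boldsymbol{v}\|^2-c_{N,m}\xrightarrow{\Pr}0\). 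This holds uniformly in the deterministic unit vector \(\boldsymbol{v}\) and needs only second moments.
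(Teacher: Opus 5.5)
Your proposal is correct. For \eqref{eq:large-theta-vector} it takes a different route from the paper; the rest matches the paper.

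\textbf{The vector bound.} The paper works with the coupled singular-vector equations for \(\boldsymbol{\hat u}(\theta)\) and a right singular vector \(\boldsymbol{\hat v}\). It proceeds in four steps:
\begin{itemize}
\item it bounds both perpendicular components by \(W/S\);
\item it uses \(U_\theta V_\theta\geq1-2W/\theta>0\) to give \(\boldsymbol{\hat u}\) and \(\boldsymbol{\hat v}\) a common sign;
\item it deduces \(\|\widetilde{\boldsymbol v}-\boldsymbol v\|\leq\sqrt2\,W/S\);
\item it writes the error as \((\theta/S-1)P\boldsymbol X\boldsymbol v+(\theta/S)P\boldsymbol X(\widetilde{\boldsymbol v}-\boldsymbol v)\), with \(P=\boldsymbol I_N-\boldsymbol u\boldsymbol u^{\top}\).
\end{itemize}
You remove the right singular vector by working with the Gram matrix \(\theta^{-2}\boldsymbol A\boldsymbol A^{\top}=\boldsymbol u\boldsymbol u^{\top}+\boldsymbol E\). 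Only the sign of \(\boldsymbol{\hat u}\) then needs fixing, so no joint orientation step is required. Your remainder \(\lambda^{-1}\theta^{-1}P\boldsymbol X\boldsymbol X^{\top}\boldsymbol{\hat u}\), with \(\lambda=S^2/\theta^2\), takes the place of the paper's \((\theta/S)P\boldsymbol X(\widetilde{\boldsymbol v}-\boldsymbol v)\). Using \(\boldsymbol X^{\top}\boldsymbol{\hat u}=S\boldsymbol{\hat v}-\theta(\boldsymbol u^{\top}\boldsymbol{\hat u})\boldsymbol v\), the two decompositions are regroupings of the same identity.

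Your bookkeeping holds using only \(\theta\geq3W\):
\begin{itemize}
\item \(\|\boldsymbol E\|_{\op}\leq7/9\), so \(\lambda\geq2/9\);
\item \(\|\boldsymbol w\|\leq CW/\theta\), hence \(0\leq1-\alpha\leq1-\alpha^2\leq CW^2/\theta^2\);
\item \(|\alpha-\lambda|\leq CW/\theta\);
\item each of the two error terms is therefore \(O(W^2/\theta)\).
\end{itemize}
Your route is a bit more economical. The paper's keeps the interpretation \(\boldsymbol{\hat v}\approx\boldsymbol v\) visible.

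\textbf{The other two claims.} Your passage to \eqref{eq:large-theta-error} via \(\bigl|\|x\|^2-\|y\|^2\bigr|\leq\|x-y\|\,(2\|y\|+\|x-y\|)\) is sound. So is your proof of \eqref{eq:q-convergence}. It uses the same ingredients as the paper: the identity \(Q_{N,m}=\|\boldsymbol X\boldsymbol v\|^2-(\boldsymbol u^{\top}\boldsymbol X\boldsymbol v)^2\), Markov's inequality for the second term, uniform integrability of \(Y_i^2\) uniformly in \(m\) and \(\boldsymbol v\) via truncation of \(\Xi\), and a truncated Chebyshev weak law. The one step you leave as a sketch is the uniform integrability. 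The paper makes it concrete with \(\E[A_{K,i}^4]\leq\E[H_K^4]+3\E[H_K^2]^2\) for the truncated sums, together with \(\E|Y_i^2-A_{K,i}^2|\leq2\E[R_K^2]^{1/2}\). This is exactly what you would need to write out.
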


\begin{proof}
Fix \(\theta\geq3W\), and write \(S=\sigma_1\left(\boldsymbol{A}\left(\theta\right)\right)\). Let \(\boldsymbol{\hat{u}}\left(\theta\right)\) be the vector selected above, and choose a corresponding unit right singular vector \(\boldsymbol{\hat{v}}\). Weyl's inequality gives
\[
 \left|S-\theta\right|\leq W,
 \qquad
 S\geq\theta-W\geq\frac{2\theta}{3}.
\]
The singular-vector equations imply
\[
 S\left(\boldsymbol{I}_N-\boldsymbol{u}\boldsymbol{u}^{\top}\right)\boldsymbol{\hat{u}}\left(\theta\right)
 =\left(\boldsymbol{I}_N-\boldsymbol{u}\boldsymbol{u}^{\top}\right)\boldsymbol{X}\boldsymbol{\hat{v}},
\]
and
\[
 S\left(\boldsymbol{I}_m-\boldsymbol{v}\boldsymbol{v}^{\top}\right)\boldsymbol{\hat{v}}
 =\left(\boldsymbol{I}_m-\boldsymbol{v}\boldsymbol{v}^{\top}\right)\boldsymbol{X}^{\top}\boldsymbol{\hat{u}}\left(\theta\right).
\]
Hence
\[
 \left\|\left(\boldsymbol{I}_N-\boldsymbol{u}\boldsymbol{u}^{\top}\right)\boldsymbol{\hat{u}}\left(\theta\right)\right\|\leq\frac{W}{S},
 \qquad
 \left\|\left(\boldsymbol{I}_m-\boldsymbol{v}\boldsymbol{v}^{\top}\right)\boldsymbol{\hat{v}}\right\|\leq\frac{W}{S}.
\]

Put \(U_{\theta}:=\boldsymbol{u}^{\top}\boldsymbol{\hat{u}}\left(\theta\right)\) and \(V_{\theta}:=\boldsymbol{v}^{\top}\boldsymbol{\hat{v}}\). Taking the inner product of the first singular-vector equation with \(\boldsymbol{\hat{u}}\left(\theta\right)\) gives
\[
 S=\theta U_{\theta}V_{\theta}+\boldsymbol{\hat{u}}\left(\theta\right)^{\top}\boldsymbol{X}\boldsymbol{\hat{v}}.
\]
Since \(S\geq\theta-W\) and \(\left|\boldsymbol{\hat{u}}\left(\theta\right)^{\top}\boldsymbol{X}\boldsymbol{\hat{v}}\right|\leq W\),
\[
 U_{\theta}V_{\theta}\geq1-\frac{2W}{\theta}>0.
\]
Thus there is a sign \(\varsigma\in\left\{-1,1\right\}\) such that
\[
 \widetilde{\boldsymbol{u}}:=\varsigma\boldsymbol{\hat{u}}\left(\theta\right),
 \qquad
 \widetilde{\boldsymbol{v}}:=\varsigma\boldsymbol{\hat{v}}
\]
satisfy \(\boldsymbol{u}^{\top}\widetilde{\boldsymbol{u}}\geq0\) and \(\boldsymbol{v}^{\top}\widetilde{\boldsymbol{v}}\geq0\). The second perpendicular-component bound then gives
\[
 \left\|\widetilde{\boldsymbol{v}}-\boldsymbol{v}\right\|^2
 =2\left(1-\boldsymbol{v}^{\top}\widetilde{\boldsymbol{v}}\right)
 \leq2\left[1-\left(\boldsymbol{v}^{\top}\widetilde{\boldsymbol{v}}\right)^2\right]
 \leq\frac{2W^2}{S^2}.
\]

Returning to the first projected equation, now with the oriented pair, gives
\[
 \begin{aligned}
 &\theta\left(\boldsymbol{I}_N-\boldsymbol{u}\boldsymbol{u}^{\top}\right)\widetilde{\boldsymbol{u}}
 -\left(\boldsymbol{I}_N-\boldsymbol{u}\boldsymbol{u}^{\top}\right)\boldsymbol{X}\boldsymbol{v}\\
 &\qquad=\left(\frac{\theta}{S}-1\right)
 \left(\boldsymbol{I}_N-\boldsymbol{u}\boldsymbol{u}^{\top}\right)\boldsymbol{X}\boldsymbol{v}
 +\frac{\theta}{S}\left(\boldsymbol{I}_N-\boldsymbol{u}\boldsymbol{u}^{\top}\right)\boldsymbol{X}\left(\widetilde{\boldsymbol{v}}-\boldsymbol{v}\right).
 \end{aligned}
\]
Using \(\left|S-\theta\right|\leq W\), \(S\geq2\theta/3\), and \(\left\|\widetilde{\boldsymbol{v}}-\boldsymbol{v}\right\|\leq\sqrt{2}W/S\), we obtain
\[
 \left\|\theta\left(\boldsymbol{I}_N-\boldsymbol{u}\boldsymbol{u}^{\top}\right)\widetilde{\boldsymbol{u}}
 -\left(\boldsymbol{I}_N-\boldsymbol{u}\boldsymbol{u}^{\top}\right)\boldsymbol{X}\boldsymbol{v}\right\|
 \leq\frac{CW^2}{\theta}.
\]
Since \(\widetilde{\boldsymbol{u}}=\varsigma\boldsymbol{\hat{u}}\left(\theta\right)\), this is equivalent to \eqref{eq:large-theta-vector}.

On the event \(M\geq3W\), apply \eqref{eq:large-theta-vector} to every \(\theta\geq M\). The two vectors in that bound have norms differing by at most \(CW^2/\theta\), while \(\left\|\left(\boldsymbol{I}_N-\boldsymbol{u}\boldsymbol{u}^{\top}\right)\boldsymbol{X}\boldsymbol{v}\right\|\leq W\). Since
\[
 \mathcal{L}_{N,m}\left(\theta\right)
 =\left\|\theta\left(\boldsymbol{I}_N-\boldsymbol{u}\boldsymbol{u}^{\top}\right)\boldsymbol{\hat{u}}\left(\theta\right)\right\|^2,
\]
it follows that
\[
 \left|\mathcal{L}_{N,m}\left(\theta\right)-Q_{N,m}\right|
 \leq C\left(\frac{W^3}{\theta}+\frac{W^4}{\theta^2}\right).
\]
Taking the supremum over \(\theta\geq M\) proves \eqref{eq:large-theta-error}.

It remains to prove \eqref{eq:q-convergence}. Define
\[
 Y_i:=\sum_{j=1}^m v_j\Xi_{ij},
 \qquad
 1\leq i\leq N.
\]
For each \(N,m\), the variables \(Y_1,\ldots,Y_N\) are independent and identically distributed, centered, and have variance one. We first show that their squares are uniformly integrable: large values contribute arbitrarily little to their expectations, uniformly over all dimensions and all deterministic unit vectors \(\boldsymbol{v}\).

Fix \(K>0\), and write
\[
 \Xi=H_K+R_K,
 \qquad
 H_K:=\Xi\1\left\{\left|\Xi\right|\leq K\right\}
 -\E\left[\Xi\1\left\{\left|\Xi\right|\leq K\right\}\right].
\]
Both \(H_K\) and \(R_K\) are centered, and \(\E\left[R_K^2\right]\to0\). Let \(H_{K,ij}\) and \(R_{K,ij}\) be the corresponding parts of \(\Xi_{ij}\), and define
\[
 A_{K,i}:=\sum_{j=1}^m v_jH_{K,ij},
 \qquad
 B_{K,i}:=\sum_{j=1}^m v_jR_{K,ij}.
\]
Then \(Y_i=A_{K,i}+B_{K,i}\), and independence across \(j\) gives
\[
 \sup_{m,\,\left\|\boldsymbol{v}\right\|=1}\E\left[A_{K,i}^4\right]
 \leq\E\left[H_K^4\right]+3\E\left[H_K^2\right]^2<\infty.
\]
Thus, for fixed \(K\), the squares \(A_{K,i}^2\) are uniformly integrable. Moreover,
\[
 \E\left[\left|Y_i^2-A_{K,i}^2\right|\right]
 \leq\left\|Y_i-A_{K,i}\right\|_2\left\|Y_i+A_{K,i}\right\|_2
 \leq2\E\left[R_K^2\right]^{1/2},
\]
uniformly in \(m\) and \(\boldsymbol{v}\). Letting \(K\to\infty\) proves uniform integrability of \(Y_i^2\).

To prove the required weak law, fix \(L>0\) and truncate \(Y_i^2\) at level \(L\). The variance of the average of the bounded, centered truncated variables is at most \(L^2/N\), so Chebyshev's inequality makes this average converge to zero in probability. Markov's inequality applied to the nonnegative tail average, together with uniform integrability, makes the contribution from \(Y_i^2>L\) uniformly small as \(L\to\infty\). Therefore,
\[
 \frac{1}{N}\sum_{i=1}^N Y_i^2\xrightarrow{\Pr}1.
\]
Finally,
\[
 Q_{N,m}
 =\frac{1}{m}\sum_{i=1}^N Y_i^2
 -\frac{1}{m}\left(\sum_{i=1}^N u_iY_i\right)^2.
\]
The first term converges in probability to \(c\) by \eqref{eq:dimension-ratio}. The second term is nonnegative and has expectation \(1/m\), because \(\sum_{i=1}^N u_iY_i\) has variance one. Markov's inequality therefore makes the second term converge to zero in probability. This proves \eqref{eq:q-convergence}.
\end{proof}

\begin{proof}[Proof of the sufficient part of Theorem~\ref{thm:exact-tail-condition}]
Let \(t_{N,m}:=c_{N,m}^{1/4}\). We prove \eqref{eq:uniform-upper-curve} by considering four ranges of \(\theta\). For every \(\theta\geq0\),
\[
 0\leq\mathcal{L}_{N,m}\left(\theta\right)\leq\theta^2.
\]
Hence, for \(0\leq\theta\leq t_{N,m}\),
\[
 \mathcal{L}_{N,m}\left(\theta\right)-\mathcal{R}_{c_{N,m}}\left(\theta\right)\leq0.
\]

We next consider values just above \(t_{N,m}\). For \(\theta>t_{N,m}\),
\[
 \theta^2-\mathcal{R}_{c_{N,m}}\left(\theta\right)
 =\frac{\theta^4-c_{N,m}}{c_{N,m}+\theta^2}.
\]
The right-hand side is zero at \(\theta=t_{N,m}\). Since \(c_{N,m}\to c\), the displayed expression is a continuous function of \(c_{N,m}\) and \(\theta\) on a fixed closed bounded rectangle containing these values. Therefore, for every \(\varepsilon>0\), one can choose \(\delta>0\) such that, for all sufficiently large \(N,m\),
\[
 \sup_{t_{N,m}\leq\theta\leq t_{N,m}+\delta}
 \left\{\mathcal{L}_{N,m}\left(\theta\right)-\mathcal{R}_{c_{N,m}}\left(\theta\right)\right\}
 \leq\varepsilon.
\]

For the remaining bounded values of \(\theta\), choose \(M>c^{1/4}+2\delta\). For all sufficiently large \(N,m\),
\[
 \left[t_{N,m}+\delta,M\right]
 \subset\left[c^{1/4}+\delta/2,M\right].
\]
Lemma~\ref{lem:bounded-uniformity}, applied to the fixed interval on the right, gives the required convergence on this range.

It remains to consider \(\theta\geq M\). Lemma~\ref{lem:fixed-theta} gives \(W\xrightarrow{\Pr}1+\sqrt{c}\). Lemma~\ref{lem:large-theta} therefore implies
\[
 \lim_{M\to\infty}\limsup_{N,m\to\infty}
 \Pr\left\{\sup_{\theta\geq M}\left|\mathcal{L}_{N,m}\left(\theta\right)-Q_{N,m}\right|>\varepsilon\right\}=0.
\]
Also, \eqref{eq:q-convergence} gives \(Q_{N,m}-c_{N,m}\xrightarrow{\Pr}0\), while a direct calculation gives
\[
 \sup_{\theta\geq M}\left|\mathcal{R}_{c_{N,m}}\left(\theta\right)-c_{N,m}\right|
 \leq\frac{C}{M^2}
\]
for all sufficiently large \(N,m\). Combining these four ranges gives the required upper bound. At \(\theta=0\), both \(\mathcal{L}_{N,m}\left(0\right)\) and \(\mathcal{R}_{c_{N,m}}\left(0\right)\) are zero, so the supremum in \eqref{eq:uniform-upper-curve} is nonnegative. This proves \eqref{eq:uniform-upper-curve}.

We now prove \eqref{eq:largest-error}. Since
\[
 \sup_{\theta\geq0}\mathcal{R}_{c_{N,m}}\left(\theta\right)
 =c_{N,m}\vee\sqrt{c_{N,m}},
\]
Equation~\eqref{eq:uniform-upper-curve} gives
\[
 \sup_{\theta\geq0}\mathcal{L}_{N,m}\left(\theta\right)
 \leq c\vee\sqrt{c}+o_{\Pr}\left(1\right).
\]
To obtain the matching lower bound, first suppose \(c<1\). Choose a fixed \(\theta>c^{1/4}\) sufficiently close to \(c^{1/4}\) that \(\mathcal{R}_c\left(\theta\right)\) is arbitrarily close to \(\sqrt{c}\). Lemma~\ref{lem:fixed-theta} gives the required lower bound. If \(c=1\), every fixed \(\theta>1\) satisfies \(\mathcal{R}_1\left(\theta\right)=1\). If \(c>1\), take any deterministic sequence \(\theta_{N,m}\to\infty\). Lemma~\ref{lem:large-theta} gives
\[
 \mathcal{L}_{N,m}\left(\theta_{N,m}\right)\xrightarrow{\Pr}c.
\]
This proves \eqref{eq:largest-error}.
\end{proof}

\subsection{When the tail condition fails}
\label{subsec:failure}

\begin{proof}[Proof of the converse statements in Theorem~\ref{thm:exact-tail-condition}]
Suppose that \eqref{eq:fourth-tail-condition} fails. Then there exist \(\delta_0>0\) and numbers \(x_k\to\infty\) such that
\[
 x_k^4\Pr\left\{\left|\Xi\right|>x_k\right\}\geq\delta_0.
\]
Take \(K>4\), to be enlarged below, and pass to a further subsequence if necessary so that
\[
 m_k:=\left\lfloor\left(\frac{x_k}{K}\right)^2\right\rfloor,
 \qquad
 N_k:=\left\lfloor cm_k\right\rfloor.
\]
Define
\[
 T_k:=\max_{\substack{2\leq i\leq N_k\\1\leq j\leq m_k}}
 \frac{\left|\Xi_{ij}\right|}{\sqrt{m_k}}.
\]
Since \(K\sqrt{m_k}\leq x_k\),
\[
 \Pr\left\{\left|\Xi\right|>K\sqrt{m_k}\right\}
 \geq\frac{\delta_0}{x_k^4}.
\]
There are \(\left(N_k-1\right)m_k\) entries in the maximum. If \(p_k:=\Pr\left\{\left|\Xi\right|>K\sqrt{m_k}\right\}\), independence gives
\[
 \Pr\left\{T_k\leq K\right\}=\left(1-p_k\right)^{\left(N_k-1\right)m_k}
 \leq\exp\left\{-\left(N_k-1\right)m_kp_k\right\}.
\]
Since \(m_k^2/x_k^4\to K^{-4}\) and \(\left(N_k-1\right)/m_k\to c\), the exponent is at least \(c\delta_0/\left(2K^4\right)\) for all sufficiently large \(k\). Hence
\[
 \liminf_{k\to\infty}\Pr\left\{T_k>K\right\}
 \geq1-\exp\left\{-\frac{c\delta_0}{2K^4}\right\}>0.
\]

Take \(\boldsymbol{u}=\boldsymbol{e}_1^{\left(N_k\right)}\) and \(\boldsymbol{v}=\boldsymbol{e}_1^{\left(m_k\right)}\). Let
\[
 R_k:=\left\|\left(\boldsymbol{e}_1^{\left(N_k\right)}\right)^{\top}\boldsymbol{X}\right\|
\]
be the norm of the first row of \(\boldsymbol{X}\). The weak law applied to \(\Xi^2\) gives \(R_k\xrightarrow{\Pr}1\). Moreover, \(R_k\) is independent of \(T_k\), because \(T_k\) uses only rows \(2,\ldots,N_k\).

Set \(\theta_0:=K/2\). Let \(S_k\) be the largest singular value of \(\boldsymbol{A}\left(\theta_0\right)\), and let \(\left(\boldsymbol{\hat{u}}_k,\boldsymbol{\hat{v}}_k\right)\) be a corresponding singular-vector pair. An entry attaining \(T_k\) lies outside the first row and is unchanged by the addition of \(\theta_0\boldsymbol{e}_1\boldsymbol{e}_1^{\top}\). Therefore, \(S_k\geq T_k\).

The first coordinate of \(\boldsymbol{A}\left(\theta_0\right)\boldsymbol{\hat{v}}_k=S_k\boldsymbol{\hat{u}}_k\) gives
\[
 S_k\left|\left(\boldsymbol{e}_1^{\left(N_k\right)}\right)^{\top}\boldsymbol{\hat{u}}_k\right|
 \leq\left\|\left(\boldsymbol{e}_1^{\left(N_k\right)}\right)^{\top}\boldsymbol{A}\left(\theta_0\right)\right\|
 \leq\theta_0+R_k.
\]
On the event \(\left\{T_k>K,\ R_k\leq K/4\right\}\),
\[
 \left|\left(\boldsymbol{e}_1^{\left(N_k\right)}\right)^{\top}\boldsymbol{\hat{u}}_k\right|
 \leq\frac{K}{2T_k}+\frac{R_k}{T_k}\leq\frac{3}{4}.
\]
Consequently,
\[
 \mathcal{L}_{N_k,m_k}\left(\theta_0\right)
 \geq\frac{K^2}{4}\left(1-\frac{9}{16}\right)
 =\frac{7K^2}{64}.
\]
Choose \(K\) sufficiently large that \(7K^2/64>c\vee\sqrt{c}+\varepsilon\) for some \(\varepsilon>0\). The positive lower bound for \(\Pr\left\{T_k>K\right\}\), the independence of \(T_k\) and \(R_k\), and the convergence \(R_k\xrightarrow{\Pr}1\) prove the claim for the fixed value \(\theta_0\).

Now suppose that the tail has the form stated in the last part of Theorem~\ref{thm:exact-tail-condition}. For every fixed \(K>0\),
\[
 \left(N-1\right)m\Pr\left\{\left|\Xi\right|>K\sqrt{m}\right\}
 =\left(N-1\right)m\left(K\sqrt{m}\right)^{-\alpha}\ell\left(K\sqrt{m}\right)
 \rightarrow\infty,
\]
because \(N/m\to c\), \(2-\alpha/2>0\), and \(\ell\) is slowly varying.
The same maximum argument gives \(T_{N,m}\xrightarrow{\Pr}\infty\), where \(T_{N,m}\) is defined as above with \(N,m\) in place of \(N_k,m_k\). The first-row norm satisfies \(R_{N,m}=O_{\Pr}\left(1\right)\), so \(R_{N,m}/T_{N,m}\xrightarrow{\Pr}0\).

For each realization of the matrix, the supremum over \(\theta\) is at least the value obtained at \(\theta=T_{N,m}/2\). The preceding coordinate bound then gives
\[
 \mathcal{L}_{N,m}\left(T_{N,m}/2\right)
 \geq\frac{T_{N,m}^2}{4}
 \left[1-\left(\frac{1}{2}+\frac{R_{N,m}}{T_{N,m}}\right)^2\right]
 \xrightarrow{\Pr}\infty.
\]
Therefore,
\[
 \sup_{\theta\geq0}\mathcal{L}_{N,m}\left(\theta\right)
 \xrightarrow{\Pr}\infty.
\]
\end{proof}

\subsection{A Gaussian consequence}
\label{subsec:gaussian-consequence}

Theorem~\ref{thm:exact-tail-condition} is stated after dividing the noise
matrix by \(\sqrt{m}\) and writing the added rank-one matrix as
\(\theta\boldsymbol{u}\boldsymbol{v}^{\mathsf T}\). The following corollary
gives the corresponding statement for a matrix in its original scale. The
first conclusion does not require the ratio \(N/m\) to converge.

\begin{corollary}
\label{cor:gaussian-matrix}
Let \(N\) and \(m\) tend to infinity in such a way that
\[
 \min\left\{N,m\right\}\rightarrow\infty,
 \qquad
 0<\underline{c}\leq\frac{N}{m}\leq\overline{c}<\infty.
\]
Let \(\boldsymbol{Z}\) be an \(N\times m\) matrix with independent standard
normal entries, and let \(\sigma>0\) be deterministic. For deterministic
vectors \(\boldsymbol{a}\in\R^N\) and \(\boldsymbol{b}\in\R^m\), define
\[
 \boldsymbol{M}\left(\boldsymbol{a},\boldsymbol{b}\right)
 :=
 \boldsymbol{a}\boldsymbol{b}^{\mathsf T}
 +
 \sigma\boldsymbol{Z}.
\]
Let
\(\boldsymbol{\hat{u}}\left(\boldsymbol{a},\boldsymbol{b}\right)\)
be a leading left singular vector of
\(\boldsymbol{M}\left(\boldsymbol{a},\boldsymbol{b}\right)\), chosen by the
same scale-invariant measurable rule as above, and define
\[
 \boldsymbol{\hat{P}}\left(\boldsymbol{a},\boldsymbol{b}\right)
 :=
 \boldsymbol{I}_N
 -
 \boldsymbol{\hat{u}}\left(\boldsymbol{a},\boldsymbol{b}\right)
 \boldsymbol{\hat{u}}\left(\boldsymbol{a},\boldsymbol{b}\right)^{\mathsf T}.
\]
Then, for every fixed \(\varepsilon>0\),
\begin{equation}
 \sup_{\substack{\boldsymbol{a}\in\R^N\\ \boldsymbol{b}\in\R^m}}
 \Pr\left\{
 \left\|
 \boldsymbol{\hat{P}}\left(\boldsymbol{a},\boldsymbol{b}\right)
 \boldsymbol{a}\boldsymbol{b}^{\mathsf T}
 \right\|_{\mathrm F}^2
 >
 \left(1+\varepsilon\right)\sigma^2
 \left(N\vee\sqrt{Nm}\right)
 \right\}
 \rightarrow0,
 \label{eq:gaussian-upper-bound}
\end{equation}
where \(\left\|\cdot\right\|_{\mathrm F}\) denotes the Frobenius norm.

Moreover, suppose that
\[
 \frac{N}{m}\rightarrow c\in\left(0,\infty\right).
\]
For every fixed \(\varepsilon\in\left(0,1\right)\), there exist deterministic
sequences
\(\boldsymbol{a}_{N,m}\in\R^N\) and
\(\boldsymbol{b}_{N,m}\in\R^m\) such that
\begin{equation}
 \Pr\left\{
 \left\|
 \boldsymbol{\hat{P}}\left(
 \boldsymbol{a}_{N,m},\boldsymbol{b}_{N,m}
 \right)
 \boldsymbol{a}_{N,m}\boldsymbol{b}_{N,m}^{\mathsf T}
 \right\|_{\mathrm F}^2
 >
 \left(1-\varepsilon\right)\sigma^2
 \left(N\vee\sqrt{Nm}\right)
 \right\}
 \rightarrow1.
 \label{eq:gaussian-lower-bound}
\end{equation}
Consequently, the number multiplying the right-hand side of
\eqref{eq:gaussian-upper-bound} cannot be replaced by a fixed number smaller
than one.
\end{corollary}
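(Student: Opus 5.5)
The plan is to reduce the corollary to Theorem~\ref{thm:exact-tail-condition} and Lemma~\ref{lem:fixed-theta} by a change of scale. If \(\boldsymbol{a}=\boldsymbol{0}\) or \(\boldsymbol{b}=\boldsymbol{0}\), the Frobenius norm is zero and there is nothing to prove. Otherwise, write \(\boldsymbol{u}:=\boldsymbol{a}/\|\boldsymbol{a}\|\), \(\boldsymbol{v}:=\boldsymbol{b}/\|\boldsymbol{b}\|\) and \(\theta:=\|\boldsymbol{a}\|\|\boldsymbol{b}\|/(\sigma\sqrt{m})\). Then
\[
 \boldsymbol{M}\left(\boldsymbol{a},\boldsymbol{b}\right)=\sigma\sqrt{m}\left(\boldsymbol{X}+\theta\boldsymbol{u}\boldsymbol{v}^{\top}\right)=\sigma\sqrt{m}\,\boldsymbol{A}\left(\theta\right).
\]
Because the tie-breaking rule is invariant under multiplication by a positive number, \(\boldsymbol{\hat{u}}\left(\boldsymbol{a},\boldsymbol{b}\right)=\boldsymbol{\hat{u}}\left(\theta\right)\). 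Hence
\[
 \left\|\boldsymbol{\hat{P}}\boldsymbol{a}\boldsymbol{b}^{\top}\right\|_{\mathrm F}^2=\|\boldsymbol{a}\|^2\|\boldsymbol{b}\|^2\left\|\boldsymbol{\hat{P}}\boldsymbol{u}\right\|^2=\sigma^2m\,\theta^2\left[1-\left|\boldsymbol{u}^{\top}\boldsymbol{\hat{u}}\left(\theta\right)\right|^2\right]=\sigma^2m\,\mathcal{L}_{N,m}\left(\theta\right).
\]
Since \(\sigma^2m\left(c_{N,m}\vee\sqrt{c_{N,m}}\right)=\sigma^2\left(N\vee\sqrt{Nm}\right)\), the event in \eqref{eq:gaussian-upper-bound} becomes
\[
 \left\{\mathcal{L}_{N,m}\left(\theta\right)>\left(1+\varepsilon\right)\left(c_{N,m}\vee\sqrt{c_{N,m}}\right)\right\}.
\]
Standard normal entries satisfy \eqref{eq:fourth-tail-condition}.

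For \eqref{eq:gaussian-upper-bound}, I would argue by contradiction. If the supremum does not tend to zero, there are \(\eta>0\), a sequence of dimensions, and deterministic \(\left(\boldsymbol{u},\boldsymbol{v},\theta\right)\) along it at which the probability of the event above exceeds \(\eta\). Since \(N/m\in\left[\underline{c},\overline{c}\right]\), I would pass to a further subsequence along which \(N/m\to c\in\left[\underline{c},\overline{c}\right]\subset\left(0,\infty\right)\).

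Along this subsequence, the sufficient part of Theorem~\ref{thm:exact-tail-condition} applies to the direction sequence \(\boldsymbol{u},\boldsymbol{v}\). Its proof uses only the convergence of the ratio, so it also holds along subsequences. It gives
\[
 \sup_{\theta\geq0}\left\{\mathcal{L}_{N,m}\left(\theta\right)-\mathcal{R}_{c_{N,m}}\left(\theta\right)\right\}\xrightarrow{\Pr}0.
\]
This controls \(\mathcal{L}_{N,m}\) at every \(\theta\) simultaneously, in particular at the possibly dimension-dependent \(\theta\) chosen above. We also have \(\mathcal{R}_{c_{N,m}}\leq c_{N,m}\vee\sqrt{c_{N,m}}\). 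Moreover, \(\varepsilon\left(c_{N,m}\vee\sqrt{c_{N,m}}\right)\geq\varepsilon\left(\underline{c}\wedge\sqrt{\underline{c}}\right)>0\), so the multiplicative slack is bounded below by a fixed positive number. The probability of the event therefore tends to zero along the subsequence, which is the contradiction. The main step requiring care is exactly this uniformity over arbitrary \(\left(\boldsymbol{a},\boldsymbol{b}\right)\), including \(\theta\) depending on the dimensions. It is handled by the supremum over \(\theta\) inside \eqref{eq:uniform-upper-curve} together with the subsequence extraction over the directions.

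For \eqref{eq:gaussian-lower-bound}, assume \(N/m\to c\). Since \(\sup_{\theta}\mathcal{R}_c\left(\theta\right)=c\vee\sqrt{c}\) is approached, though not necessarily attained, for \(\theta>c^{1/4}\), I would pick a fixed \(\theta_0>c^{1/4}\) with
\[
 \mathcal{R}_c\left(\theta_0\right)>\left(1-\varepsilon/2\right)\left(c\vee\sqrt{c}\right).
\]
For \(c<1\), this means taking \(\theta_0\) close to \(c^{1/4}\); for \(c\geq1\), it means taking \(\theta_0\) large. I would then set \(\boldsymbol{a}_{N,m}:=\sigma\sqrt{m}\,\theta_0\boldsymbol{e}_1^{\left(N\right)}\) and \(\boldsymbol{b}_{N,m}:=\boldsymbol{e}_1^{\left(m\right)}\). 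Lemma~\ref{lem:fixed-theta} gives \(\mathcal{L}_{N,m}\left(\theta_0\right)\xrightarrow{\Pr}\mathcal{R}_c\left(\theta_0\right)\). Together with \(c_{N,m}\vee\sqrt{c_{N,m}}\to c\vee\sqrt{c}\) and the identity above, this yields \eqref{eq:gaussian-lower-bound}. Finally, any fixed constant \(\kappa<1\) in place of \(1+\varepsilon\) is ruled out by applying \eqref{eq:gaussian-lower-bound} with \(\varepsilon=1-\kappa\), which shows that the corresponding probability tends to one rather than zero.
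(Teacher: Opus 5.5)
Your proposal is correct and follows essentially the same route as the paper: the same rescaling to $\sigma^2 m\,\mathcal{L}_{N,m}(\theta)$, the same contradiction-plus-subsequence argument for the upper bound, and coordinate vectors for the lower bound. The only minor difference is in the lower bound for $c>1$, where you use a fixed large $\theta_0$ with Lemma~\ref{lem:fixed-theta}; this suffices because only the fraction $(1-\varepsilon)$ of $c$ is needed, whereas the paper takes $\theta_{N,m}\to\infty$ and uses Lemma~\ref{lem:large-theta}.
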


\begin{proof}
Suppose first that
\(\boldsymbol{a}\neq\boldsymbol{0}\) and
\(\boldsymbol{b}\neq\boldsymbol{0}\), and define
\[
 \boldsymbol{u}
 :=
 \frac{\boldsymbol{a}}{\left\|\boldsymbol{a}\right\|},
 \qquad
 \boldsymbol{v}
 :=
 \frac{\boldsymbol{b}}{\left\|\boldsymbol{b}\right\|},
 \qquad
 \theta
 :=
 \frac{
 \left\|\boldsymbol{a}\right\|
 \left\|\boldsymbol{b}\right\|
 }{
 \sigma\sqrt{m}
 }.
\]
Then \(\boldsymbol{u}\) and \(\boldsymbol{v}\) are unit vectors and
\[
 \frac{
 \boldsymbol{M}\left(\boldsymbol{a},\boldsymbol{b}\right)
 }{
 \sigma\sqrt{m}
 }
 =
 \frac{\boldsymbol{Z}}{\sqrt{m}}
 +
 \theta\boldsymbol{u}\boldsymbol{v}^{\mathsf T}.
\]
Multiplication by the positive number
\(\left(\sigma\sqrt{m}\right)^{-1}\) does not change the vector returned by the scale-invariant selection rule.
Therefore, with \(\mathcal{L}_{N,m}\) formed from these two unit vectors,
\[
 \begin{aligned}
 &
 \left\|
 \boldsymbol{\hat{P}}\left(\boldsymbol{a},\boldsymbol{b}\right)
 \boldsymbol{a}\boldsymbol{b}^{\mathsf T}
 \right\|_{\mathrm F}^2
 \\[3pt]
 &\qquad=
 \left\|
 \boldsymbol{\hat{P}}\left(\boldsymbol{a},\boldsymbol{b}\right)
 \boldsymbol{a}
 \right\|^2
 \left\|\boldsymbol{b}\right\|^2
 =
 \sigma^2m\,\mathcal{L}_{N,m}\left(\theta\right).
 \end{aligned}
\]
Also,
\[
 \sigma^2\left(N\vee\sqrt{Nm}\right)
 =
 \sigma^2m
 \left(
 c_{N,m}\vee\sqrt{c_{N,m}}
 \right).
\]
If either \(\boldsymbol{a}\) or \(\boldsymbol{b}\) is zero, the quantity on
the left of \eqref{eq:gaussian-upper-bound} is zero, so the same reduction
continues to hold after setting \(\theta=0\).

We now prove \eqref{eq:gaussian-upper-bound}. Suppose that it fails. Then
there exist \(\eta>0\), a subsequence of dimensions, and deterministic
vectors \(\boldsymbol{a}_{N,m}\) and \(\boldsymbol{b}_{N,m}\) such that
\[
 \Pr\left\{
 \mathcal{L}_{N,m}\left(\theta_{N,m}\right)
 >
 \left(1+\varepsilon\right)
 \left(
 c_{N,m}\vee\sqrt{c_{N,m}}
 \right)
 \right\}
 \geq\eta
\]
along that subsequence. Because \(c_{N,m}\) remains between
\(\underline{c}\) and \(\overline{c}\), a further subsequence satisfies
\[
 c_{N,m}\rightarrow c
 \in\left[\underline{c},\overline{c}\right].
\]
The standard normal distribution satisfies
\eqref{eq:fourth-tail-condition}. Theorem~\ref{thm:exact-tail-condition},
applied to the corresponding deterministic sequences of unit vectors, gives
\[
 \sup_{\vartheta\geq0}
 \mathcal{L}_{N,m}\left(\vartheta\right)
 \xrightarrow{\Pr}
 c\vee\sqrt{c}.
\]
Since
\[
 \left(1+\varepsilon\right)
 \left(
 c_{N,m}\vee\sqrt{c_{N,m}}
 \right)
 \rightarrow
 \left(1+\varepsilon\right)
 \left(
 c\vee\sqrt{c}
 \right),
\]
the preceding probability must tend to zero. This contradicts its lower
bound by \(\eta\) and proves \eqref{eq:gaussian-upper-bound}.

It remains to prove \eqref{eq:gaussian-lower-bound}. Suppose first that
\(c\leq1\). Choose a fixed \(\theta>c^{1/4}\) such that
\[
 \mathcal{R}_c\left(\theta\right)
 >
 \left(1-\frac{\varepsilon}{2}\right)\sqrt{c}.
\]
Such a choice is possible by taking \(\theta\) sufficiently close to
\(c^{1/4}\) when \(c<1\); when \(c=1\), every \(\theta>1\) satisfies
\(\mathcal{R}_1\left(\theta\right)=1\). By
Lemma~\ref{lem:fixed-theta},
\[
 \mathcal{L}_{N,m}\left(\theta\right)
 \xrightarrow{\Pr}
 \mathcal{R}_c\left(\theta\right).
\]
Since
\[
 \left(1-\varepsilon\right)
 \left(
 c_{N,m}\vee\sqrt{c_{N,m}}
 \right)
 \rightarrow
 \left(1-\varepsilon\right)\sqrt{c},
\]
we obtain
\[
 \Pr\left\{
 \mathcal{L}_{N,m}\left(\theta\right)
 >
 \left(1-\varepsilon\right)
 \left(
 c_{N,m}\vee\sqrt{c_{N,m}}
 \right)
 \right\}
 \rightarrow1.
\]

If \(c>1\), choose any deterministic sequence
\(\theta_{N,m}\to\infty\). Lemma~\ref{lem:large-theta} gives
\[
 \mathcal{L}_{N,m}\left(\theta_{N,m}\right)
 \xrightarrow{\Pr}c.
\]
At the same time,
\[
 \left(1-\varepsilon\right)
 \left(
 c_{N,m}\vee\sqrt{c_{N,m}}
 \right)
 \rightarrow
 \left(1-\varepsilon\right)c<c.
\]
Thus the same probability converges to one.

In either case, let
\[
 \boldsymbol{a}_{N,m}
 :=
 \sigma\sqrt{m}\,
 \theta_{N,m}\boldsymbol{e}_1^{\left(N\right)},
 \qquad
 \boldsymbol{b}_{N,m}
 :=
 \boldsymbol{e}_1^{\left(m\right)},
\]
where \(\theta_{N,m}=\theta\) when \(c\leq1\), and where
\(\theta_{N,m}\to\infty\) is the sequence chosen above when \(c>1\). The
identity proved at the beginning of the argument gives
\[
 \left\|
 \boldsymbol{\hat{P}}\left(
 \boldsymbol{a}_{N,m},\boldsymbol{b}_{N,m}
 \right)
 \boldsymbol{a}_{N,m}\boldsymbol{b}_{N,m}^{\mathsf T}
 \right\|_{\mathrm F}^2
 =
 \sigma^2m\,
 \mathcal{L}_{N,m}\left(\theta_{N,m}\right).
\]
This proves \eqref{eq:gaussian-lower-bound}.
\end{proof}

\section*{Statements and Declarations}

\noindent\textbf{Competing Interests.} The author declares no competing interests.

\medskip
\noindent\textbf{Data Availability.} No data were generated or analyzed in this theoretical study.

\end{document}